\pgfplotsset{compat=1.15}
\setlist{topsep=3pt,itemsep=6pt}
\newcommand{\RR}{\mathbb{R}}
\newcommand{\R}{\mathbb{R}}
\newcommand{\Z}{\mathbb{Z}}
\def \bfA {\mathbf{A}}
\def \bfX {\mathbf{X}}
\DeclareMathOperator{\image}{image}
\newcommand{\cl}[1]{\mathcal{#1}}
\newcommand{\fk}[1]{\mathfrak{#1}}
\def \calO {\cl O}
\def \define {\emph}
\numberwithin{equation}{section}
\newtheorem{theorem} [equation] {Theorem}
\newtheorem*{theorem*}              {Theorem}
\newtheorem{corollary}   [equation]  {Corollary}
\newtheorem*{corollary*}   {Corollary}
\newtheorem{lemma}       [equation]  {Lemma}
\newtheorem{exercise}       [equation]  {Exercise}
\newtheorem*{proposition*}   {Proposition}
\theoremstyle{definition}
\newtheorem*{definition*}  {Definition}
\theoremstyle{remark}
\newtheorem{remark}      [equation]  {Remark}
\newtheorem*{remark*}        {Remark}
\newtheorem{example}     [equation]  {Example}
\newtheorem*{example*}              {Example}
\def\eor{\unskip\ \hglue0mm\hfill$\diamond$\smallskip\goodbreak}
\def\eoe{\unskip\ \hglue0mm\hfill$\between$\smallskip\goodbreak}
\newenvironment{enumeratea}{
  \begin{enumerate}[label = (\alph*)]}{
  \end{enumerate}}
\newif\ifdebug
\definecolor{jaw}{rgb}{0,.5,0}
\newcommand{\printname}[1]
{\ifmmode{ \smash{ \raisebox{5pt}{\text{\tiny{\textcolor{blue}{#1}}}} } }
 \else   
             \marginpar{ 
\smash{ \makebox[0pt]{\raisebox{-6pt}
                     {\tiny{\textcolor{blue}{#1}} } 
                                            } } 
}          
\fi}
\newcommand{\printnamesec}[1]
{\smash{ \makebox[0pt]{\raisebox{-6pt} {\tiny{\textcolor{blue}{#1}} } 
                                            } } } 
\newcommand{\labell}[1]{\ifdebug{\label{#1}\printname{#1}}
                         \else {\label{#1}} \fi}                                     
\newcommand{\labelsec}[1]{\ifdebug{\label{#1}\printnamesec{#1}}
                         \else {\label{#1}} \fi}                                     
\tikzstyle{arrow1} = [thick]
\tikzstyle{arrow2} = [thick,->,>=stealth]
\begin{document}

\thanks{\emph{2020 Mathematics Subject classification:} Primary 57R55}

\keywords{submanifold, diffeology, induction, weakly-embedded, 
Joris's theorem, immersion}

\title{Diffeological submanifolds and their friends}

\author{Yael Karshon}
\address{Department of Mathematics, University of Toronto, Toronto, ON, Canada,
and School of Mathematical Sciences, Tel-Aviv University, Tel-Aviv, Israel}
\email{karshon@math.toronto.edu, yaelkarshon@tauex.tau.ac.il}

\author{David Miyamoto}
\address{Department of Mathematics, University of Toronto, Toronto, ON, Canada}
\email{david.miyamoto@mail.utoronto.ca}

\author{Jordan Watts}
\address{Department of Mathematics, Central Michigan University, Mount Pleasant, MI, USA}
\email{jordan.watts@cmich.edu}

\date{\today}
\maketitle

\begin{abstract}
A smooth manifold hosts different types of submanifolds, including
embedded, weakly-embedded, and immersed submanifolds. 
The notion of an immersed submanifold 
requires additional structure (namely, the choice of a topology);
when this additional structure is unique, we call the subset
a \emph{uniquely immersed submanifold}.
Diffeology provides yet another intrinsic notion of submanifold:
a \emph{diffeological submanifold}.

We show that from a categorical perspective
diffeology rises above the others:
viewing manifolds as a concrete category over the category of sets,
the \emph{initial morphisms} 
are exactly the (diffeological) \emph{inductions},
which are the diffeomorphisms with diffeological submanifolds.
Moreover, if we view manifolds as a concrete category over the category of 
topological spaces, we recover Joris and Preissmann's notion 
of \emph{pseudo-immersions}.

We show that these notions are all different.
In particular, a theorem of Joris from 1982 yields a diffeological submanifold 
whose inclusion is not an immersion,
answering a question that was posed by Iglesias-Zemmour.
We also characterize local inductions as those pseudo-immersions
that are locally injective.

In appendices, we review a proof of Joris' theorem, 
pointing at a flaw in one of the several other proofs 
that occur in the literature,
and we illustrate how submanifolds inherit paracompactness
from their ambient manifold.
\end{abstract}

\section{Overview}
\labelsec{sec:overview}

This is a mostly-expository paper about intrinsic notions of ``submanifold''.

Weakly-embedded submanifolds --- for example, 
an irrational line in the torus --- are abundant; see Example~\ref{ex:leaves}.
Until some time ago, the senior author --- Yael Karshon --- 
was under the (incorrect) impression 
that the notion of a weakly-embedded submanifold
``obviously'' coincides with the diffeological notion of a submanifold
(when the ambient space is a manifold).
Jordan Watts pointed out that this ``fact'' is not obvious,
and it is equivalent to an open question that Iglesias-Zemmour
posed in his book \cite{Iglesias-Zemmour2013}.
After playing around with these concepts
we came up with the cusp $\{ (x,y) \in \R^2 \ | \ x^2 = y^3 \}$,
but we couldn't determine if it is a diffeological submanifold.
David Miyamoto then discovered Joris' theorem~\cite{Joris1982}, 
which implies that the cusp is a diffeological submanifold,
resolving Iglesias-Zemmour's open question.
The one-page note that we planned to write about this observation
evolved into the current paper.

In Section~\ref{sec:submanifolds}, we compare (embedded)
submanifolds, weakly-embedded submanifolds,
diffeological submanifolds, and uniquely immersed submanifolds.
All of these notions are intrinsic: 
they are properties of a subset $S$ of a manifold that 
do not require an {\it a priori} choice of an additional structure on $S$.
This is in contrast with the notion of ``immersed submanifold'',
which is not intrinsic.
As far as we know, the fourth of these notions --- that of a 
uniquely immersed submanifold --- is new.  
With the help of Joris' theorem,
we show that these four notions of submanifolds are all different.

Section~\ref{sec:maps} is about (diffeological) inductions 
and pseudo-immersions of manifolds.
Iglesias-Zemmour's open question was whether every induction between manifolds
is an immersion; Joris' theorem provides a negative answer.
The related notion of a pseudo-immersion was defined
by Joris and Preissmann~\cite{Joris_Preissmann1987}, 
following a suggestion of Fr\"olicher.
Every induction between manifolds is a pseudo-immersion, 
but being a pseudo-immersion is a local property,
so a pseudo-immersion is not necessarily an induction.
Moreover, a pseudo-immersion need not even be a local induction:
Joris and Preissmann found a pseudo-immersion that is not locally injective.
This naturally raises the question of whether 
every \emph{locally injective} pseudo-immersion is a local induction.
We give a positive answer to this question, in Corollary~\ref{cor:1}.
Finally, in Remark~\ref{rk:categorical}
we show that both inductions and pseudo-immersions 
arise naturally from a categorical point of view, as initial morphisms,
when we view the category of manifolds
as a concrete category over the category of sets,
and, respectively,
as a concrete category over the category of topological spaces.

In Appendix~\ref{sec:joris} we present a proof of Joris' theorem
that is based on that of Amemiya and Masuda~\cite{Amemiya_Masuda1989}.
We also point a subtle error in another author's
shorter alleged proof of this theorem.
In Appendix~\ref{app:diffeology} we review the basics of diffeology.
In Appendix~\ref{app:criterion-smooth-map} we prove
a characterization of inductions
that we used in Remark~\ref{rk:categorical}.
In Appendix~\ref{app:paracompactness} we discuss 
the paracompactness and second countability assumptions
that appear in definitions of a manifold.
Paracompactness has the advantage that this property 
is automatically inherited by diffeological (in particular, weakly-embedded) 
submanifolds.
In Appendix~\ref{app:literature} we compare the concepts of this paper
with similar concepts in some books.

\subsection*{Acknowledgement}
Yael Karshon is grateful for helpful conversations
with Patrick Iglesias-Zemmour and with Asaf Shachar.
This work was partly funded by the Natural Sciences
and Engineering Research Council of Canada.

\section{Submanifolds}
\labelsec{sec:submanifolds}

\subsection*{Submanifolds, diffeological submanifolds, and 
weakly-embedded submanifolds}\ 

A \emph{smooth structure} on a topological space 
is a maximal smooth atlas.
Except where we say otherwise, a \define{manifold} is a set
equipped with a topology that is Hausdorff and paracompact
and with a smooth structure.
\begin{itemize}
\item
A subset $S$ of a manifold $N$ is a \define{submanifold}
(equivalently, \define{embedded submanifold})
if it has a (necessarily unique) manifold structure 
such that a real-valued function $f \colon S \to \R$ 
is smooth if and only if 
each point of $S$ has an open neighbourhood $V$ in~$N$ 
and a smooth function $F \colon V \to \R$
that coincides with $f$ on $S \cap V$.
\end{itemize}

For equivalent definitions, see Remark~\ref{immersed}.

Some authors also refer to \define{properly embedded} submanifolds,
namely, submanifolds whose inclusion map is proper. 
This holds if and only if the submanifold is closed
in the topology of the ambient manifold.

\begin{example}
Open subsets of manifolds are submanifolds.
Regular level sets of smooth functions are properly embedded submanifolds.
\eoe
\end{example}

A \define{diffeological space} is a set $X$, equipped 
with a collection of maps (which are declared to be smooth)
from open subsets of Cartesian spaces to~$X$, that satisfies three axioms.
Manifolds can be identified with those diffeological spaces
that satisfy certain conditions.
See Appendix~\ref{app:diffeology} for details.
The diffeological notion of ``submanifold'', in the special case 
that the ambient diffeological space is a manifold, is equivalent 
to the first of the following two notions.

\bigskip
Let $N$ be a manifold, $S$ a subset of $N$, and $\iota \colon S \to N$ 
the inclusion map.

\begin{itemize}
\item
The subset $S$ is a \define{diffeological submanifold} of $N$ if it has a
(necessarily unique) manifold structure such that the following holds:
\begin{equation}  \labell{condition}
\begin{minipage}{5.5in}
\begin{center}
For any open subset $U$ of any Cartesian space 
and any map $q \colon U \to S$, \\
$q \colon U \to S$ is smooth 
if and only if $\iota \circ q \colon U \to N$ is smooth.
\end{center}
\end{minipage}
\end{equation}
\item
The subset $S$ is a \define{weakly-embedded submanifold} of $N$ if it
is a diffeological submanifold and 
the inclusion map $\iota \colon S \to N$ is an immersion.
\end{itemize}

Every submanifold is a weakly-embedded submanifold, but not every 
weakly-embedded submanifold is a submanifold;\footnote
{
This is an example of the \emph{red herring principle} 
\cite[Chapter~1, footnote on p.~22]{hirsch}: in mathematics, 
a red herring does not have to be either red nor a herring
}
take, for example, the irrational line $\{[t,\sqrt{2}t]\}$ 
in the torus $\R^2/\Z^2$,
or the topologist's sine curve $\{(x,y) \ | \ 
x=0 \text{ or } y=\sin(1/x) \}$ in $\R^2$.
A weakly-embedded submanifold is a submanifold if and only if 
its manifold topology agrees with its subset topology.

Is every diffeological submanifold weakly-embedded? No:
 
\begin{example} \labell{ex:cusp}
The cusp $\{(x,y) \in \RR^2 \mid x^2=y^3\}$ is a diffeological submanifold of $\RR^2$ that is not weakly-embedded; see Figure~\ref{f:cusp}.
	\begin{figure}
		\begin{center} \includegraphics[width=200px]{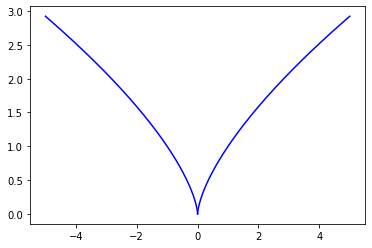} \end{center} 
		\caption{\small The cusp $x^2=y^3$.}\label{f:cusp}
	\end{figure}
\eoe
\end{example}

This follows from a theorem of Henri Joris from 1982 
\cite[Th\'eor\`eme~1]{Joris1982}:

\begin{theorem}[Joris] \labell{thm:1} 
Let $(m, n)$ be a relatively prime pair of positive integers 
and $g \colon U \to \RR$ a real-valued function on a manifold $U$.
Then $g$ is smooth if and only if $g^m$ and $g^n$ are smooth.
\end{theorem}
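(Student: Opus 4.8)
The plan is to prove the nontrivial direction: assuming $g^m$ and $g^n$ are smooth, show that $g$ is smooth (the converse is immediate, as $g^m$ and $g^n$ are products of the smooth function $g$). Since smoothness is local and can be tested in charts, I would first reduce to the case where $U$ is an open subset of some $\R^d$, and dispose of the trivial case $m=1$ or $n=1$. Write $P=g^m$ and $Q=g^n$. Because $\gcd(m,n)=1$, the integers $m,n$ are not both even, so one of them, say $m$, is odd; then $g=P^{1/m}$ is the composite of $P$ with the continuous real $m$-th root, hence $g$ is continuous. Next, Bézout's identity gives integers $a,b$ with $am+bn=1$, so on the open set $\{g\neq 0\}$ we have $g=P^{a}Q^{b}$, a smooth function of the nowhere-zero smooth functions $P,Q$; thus $g$ is already smooth away from its zero set $Z=\{g=0\}=\{P=0\}$. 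Everything then reduces to proving that $g$ is smooth across $Z$.

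To pass from one variable to several, I would invoke Boman's theorem: a function on an open subset of $\R^d$ is smooth if and only if its composite with every smooth curve is smooth. For any smooth curve $c$, the one-variable functions $(g\circ c)^m=P\circ c$ and $(g\circ c)^n=Q\circ c$ are smooth, so the one-variable case yields that $g\circ c$ is smooth, and Boman's theorem then gives smoothness of $g$. (Boman's theorem is a substantial external input; alternatively one can carry out the estimates below directly in several variables.) This isolates the heart of the matter as the one-variable statement near a zero, which I normalise to $0$.

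At a zero I would split into two cases according to whether $P$ is flat at $0$. If $P$ is \emph{not} flat, then $P$ and $Q$ vanish to finite orders $p$ and $q$, and from $|g|=|P|^{1/m}=|Q|^{1/n}$ one gets $pn=qm$; coprimality forces $m\mid p$, say $p=ms$ with $s\geq 1$. Writing $P(x)=x^{ms}\widetilde P(x)$ and $Q(x)=x^{ns}\widetilde Q(x)$ with $\widetilde P,\widetilde Q$ smooth and nonvanishing near $0$, one checks on $\{g\neq 0\}$ that $g(x)=x^{s}\,\widetilde P(x)^{a}\widetilde Q(x)^{b}$, a manifestly smooth function; so $g$ is smooth at such (necessarily isolated) zeros. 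The remaining, genuinely hard, case is a \emph{flat} zero, where $P$, and hence $Q$, has vanishing Taylor series.

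The main obstacle is precisely this flat case: I must show that all derivatives of $g$ exist and are continuous at $0$, i.e. that $g^{(j)}(x)\to 0$ as $x\to 0$ for every $j$, where a priori $g^{(j)}$ is only controlled on $\{g\neq 0\}$. Flatness of $P$ gives $g(x)=o(|x|^{k})$ for every $k$, which pins down the values $g^{(j)}(0)=0$; the difficulty is that differentiating the relation $g=P^{a}Q^{b}$, or equivalently iterating $g'=\tfrac1m (P'/P)\,g$, introduces negative powers of $P$ and $Q$ that can blow up near the flat zero. The task is to show that this blow-up is always dominated by the super-polynomial decay of $g$ itself, uniformly as one approaches $Z$, possibly along a non-isolated set of flat zeros. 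This requires careful quantitative estimates, controlling each $g^{(j)}$ through the flatness of the smooth powers together with interpolation-type bounds on intermediate derivatives; this bookkeeping is the technical core of the Amemiya--Masuda argument, and is where essentially all of the work lies.
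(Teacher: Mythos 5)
Your steps (a)--(c) are correct and match the paper's strategy exactly: the Boman reduction to one variable, continuity via an odd root together with smoothness on $\{g \neq 0\}$ from B\'ezout ($g = P^aQ^b$ where $P,Q$ are nonvanishing), and the finite-order case via the Hadamard factorization $P(x) = x^{ms}\widetilde{P}(x)$, $Q(x) = x^{ns}\widetilde{Q}(x)$ with $Mn = Nm$ and coprimality forcing $m \mid p$. But at the flat zeros --- which you correctly identify as the heart of the theorem --- you prove nothing: you describe the obstacle (negative powers of $P$ blowing up against the super-polynomial decay of $g$, possibly along a non-isolated zero set) and then defer to ``careful quantitative estimates'' and ``bookkeeping'' without supplying them. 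This is not a technical loose end; it is the entire content of the theorem, and the paper devotes its key lemma and most of Appendix~A to it. Amemiya and Masuda's actual mechanism is not an interpolation estimate at all but a ring-theoretic one: with $R$ the smooth functions on $\R \smallsetminus P$ and $S \subset R$ those extending continuously by zero across $P$, one checks that $a^r \in S$ for all large $r$ implies $a \in S$, proves that this property passes to $S[[x]] \subset R[[x]]$ (the paper's Lemma~\ref{lem:4}), and applies the Taylor homomorphism $J(h) = \sum_i x^i h^{(i)}/i!$. The numerical input is that every sufficiently large $r$ is a \emph{non-negative} combination of $m$ and $n$, so $g^r$ is smooth and flat on $P$ for all large $r$, whence $J(h)^r \in S[[x]]$, whence every derivative $g^{(i)}$ extends continuously by zero across $P$; a Mean Value Theorem lemma then yields smoothness.

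Beyond being incomplete, the route you sketch carries a specific danger that the paper flags explicitly. Iterating $g' = \tfrac{1}{m}(P'/P)\,g$ and balancing blow-up of $P'/P$ against flatness of $g$ uses essentially only the single power $g^m$; but by Glaeser's example, a function whose square alone is smooth and flat need not be smooth, so no argument of that shape can succeed without genuinely invoking both powers (in effect, all powers $g^r$ for large $r$, which is exactly where coprimality enters the Amemiya--Masuda proof). This is the same trap the paper identifies in Myers' short proof: establishing that $g^{(k)}$ exists and vanishes on $P$ --- which your $g(x) = o(|x|^k)$ observation gives pointwise --- does not establish continuity of $g^{(k)}$ at $P$, and an induction that overlooks this would ``prove'' a false statement. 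So the gap is not merely unfinished bookkeeping: the estimates you gesture at would have to be restructured around the full semigroup of smooth flat powers $g^{am+bn}$ before they could close.
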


\begin{proof}[Details for Example~\ref{ex:cusp}]
Equip the cusp with the manifold structure that is given by the
parametrization $t \mapsto (t^3,t^2)$ for $t\in\RR$. 
Joris' theorem implies that
this manifold structure satisfies Condition~\eqref{condition}. 
With this manifold structure, the inclusion map of the cusp into $\R^2$
is not an immersion.
\end{proof}

Weakly-embedded submanifolds are particularly useful:

\begin{example} \labell{ex:leaves}
On a (Hausdorff and paracompact) manifold $N$,
the leaves of any (regular) foliation are weakly-embedded submanifolds 
\cite[Theorem~19.17]{JohnLee}.
So are the leaves of any singular foliation 
\cite{sussman,stefan}.\footnote
{
Under some definitions of ``foliation'' or ``singular foliation''
this is a tautology. Here is a statement  that is not a tautology. 
A \emph{distribution} on a manifold $N$ is a subbundle of $TN$; a 
\emph{singular distribution}
is a subset of $TN$ that near each point is the span of a set of (smooth)
vector fields.
An \emph{integral submanifold} 
is a connected immersed submanifold (see Remark~\ref{immersed}) 
whose tangent space coincides 
with the singular distribution at each of its points. 
A singular distribution is \emph{integrable} if each point is contained 
in an integral submanifold. The \emph{leaves} --- defined as the 
equivalence classes of the relation generated by the integral submanifolds --- 
are then weakly-embedded integral submanifolds.
}
So are the orbits of any Lie group action on $N$ \cite{castrigiano-hayes}
and of any Lie algebroid action on $N$.
In Lie theory, the bijection between Lie subalgebras
and connected Lie subgroups is with ``Lie subgroup" interpreted 
as weakly-embedded \cite[Theorems~19.25 and~19.26]{JohnLee}.
\eoe
\end{example}

\begin{remark}  \labell{rk:criterion}
A subset $S$ of a manifold $N$ is a weakly-embedded submanifold
if and only if the following holds; see Remark~\ref{rk:def weakly embedded}.
\begin{equation} \labell{criterion}
\begin{minipage}{5in}
About each point of $S$ there is a chart 
$\varphi \colon \calO \to \Omega \subset \R^n$ of $N$ \\
that takes the smooth path component of the point in $S \cap \calO$ \\
onto the intersection of $\Omega$ with a linear subspace of $\R^n$.
\end{minipage}
\end{equation}
Here, the \emph{smooth path components} of a subset $S$ of a manifold $N$
are the equivalence classes for the equivalence relation on $S$
that is generated by the smooth paths in $N$ that are contained in $S$.
For example, let $N = \R^2$ and let $S$ be the union of the $y$-axis
with the graph of the function $x\sin(1/x)$ over the positive $x$-axis.
Then $S$ has one path component but two smooth path components
(because the curve $(x,x\sin(1/x))$, for $0 \leq x \leq 1$, 
is not rectifiable).
\eor
\end{remark}

\subsection*{Uniquely immersed submanifolds} \ 

When claiming that a subset $S$ of a manifold $N$ is a manifold, what
does ``is'' mean? This can usually be interpreted as the existence of a
unique manifold structure on $N$ such that some condition holds. 
Being a diffeological submanifold is a reasonable condition; 
being an embedded or weakly-embedded submanifold are special cases. 
In contrast to being an immersed submanifold 
(see Remark~\ref{immersed}),
these properties are intrinsic,
in that they are properties of a subset $S$ of a manifold $N$, 
not additional structures on~$S$.
Here is yet another intrinsic property, which we have not seen
elsewhere in the literature:

\begin{itemize}
\item
A subset $S$ of a manifold $N$ is a \define{uniquely immersed submanifold}
if it has a unique manifold structure such that 
the inclusion map $S \hookrightarrow N$ is an immersion.
\end{itemize}
In contrast to embedded submanifolds, weakly-embedded submanifolds,
and diffeological submanifolds, here ``unique'' is being assumed,
not concluded.

Every weakly-embedded submanifold is uniquely immersed.  
The figure eight in $\R^2$ (cf.\ Remark \ref{immersed}) 
is not uniquely immersed, because it has two distinct manifold
structures with which the inclusion map is an immersion. The cusp of
Example~\ref{ex:cusp} is also not uniquely immersed, because
it does not have any manifold structure with which the inclusion map is
an immersion. In $\R^2$, the union of the $x$-axis and the open positive
$y$-axis is a uniquely immersed submanifold that is not a diffeological
submanifold (hence is not weakly-embedded); 
see Figure~\ref{f:uniquely-immersed}.
	\begin{figure}
		\begin{center} \includegraphics[width=200px]{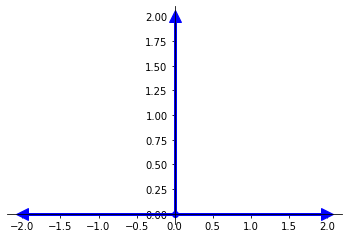} \end{center} 
		\caption{\small The union of the $x$-axis and the open positive $y$-axis is a uniquely immersed submanifold, but is not a diffeological submanifold.}\label{f:uniquely-immersed}
	\end{figure}

In Remark~\ref{immersed} below we also discuss the (non-intrinsic!) notion
of an ``immersed submanifold''.
Every uniquely immersed submanifold, with its induced manifold structure,
is an immersed submanifold.

\section{Initial maps}
\labelsec{sec:maps}

\subsection*{(Diffeological) inductions} \ 

In diffeology, a map $f \colon X \to Y$ is an \define{induction} if it is
a diffeomorphism of $X$ with a subset of~$Y$, as diffeological spaces. In
the special case that the spaces are manifolds, this diffeological notion
becomes equivalent to the following notion.

\begin{itemize}
\item
A map $f \colon M \to N$ between manifolds is an \define{induction} if it is a diffeomorphism of $M$ with a diffeological submanifold of $N$.
\end{itemize}

An immersion is not necessarily an induction, even if it is injective; 
for example, take a parametrization of the figure eight 
(cf.\ Remark \ref{immersed}).
Instead, if $f$ is an immersion, then $f$ is a \define{local induction}: 
each point $x \in M$ has an open neighbourhood $\calO$ in $M$ 
such that $f|_\calO \colon \calO \to N$ is an induction 
(\cite{Iglesias-Zemmour2013}, p.~58, Note). 
In his 2013 book, Iglesias-Zemmour writes ``I still do not know 
if there exist inductions [to] $\R^n$ that are not immersions'' 
(\cite{Iglesias-Zemmour2013}, p.17, note after Exercise~14), 
and ``Actually, it could be weird if local inductions 
between real domains\footnote{
  Iglesias--Zemmour calls open subsets of Cartesian spaces ``real domains''
}
were not immersions'' (\cite{Iglesias-Zemmour2013}, p.58, Note). 
Reformulating Example~\ref{ex:cusp}, 
we resolve Iglesias-Zemmour's open question:

\begin{example} \labell{ex:1}
The smooth map $t \mapsto (t^3,t^2)$ is an induction (by Joris' Theorem 
\ref{thm:1}), but it is not an immersion. 
\eoe
\end{example}

Iglesias-Zemmour writes (\cite{Iglesias-Zemmour2013},
p.58, footnote): ``When J.\ M.\ Souriau wrote one of his first papers on
diffeology, he named immersion what is called induction now, but after a
remark from J.\ Pradines he changed his mind''. 
In an email to David Miyamoto on October~6, 2020, 
Iglesias Zemmour clarified that
Pradines's remark was that the figure eight is an immersion that is not
an induction; Souriau's change to ``induction'' was motivated by this
example. Pradines made his observation while refereeing Iglesias-Zemmour's
thesis, in which Iglesias-Zemmour introduced ``local inductions'',
adopting Souriau's change in nomenclature.

Every induction is smooth. 
For maps between manifolds, we have the following characterization 
of inductions.

\begin{lemma} \labell{lem:induction smooth}
A smooth map $f \colon M \to N$ between manifolds is an induction 
if and only if the following holds:
\begin{equation} \labell{iff}
\begin{minipage}{5.5in}
\begin{center}
For any manifold $U$ and any map $p \colon U \to M$, \\
$p \colon U \to M$ is smooth 
 if and only if $f \circ p \colon U \to N$ is smooth.
\end{center}
\end{minipage}
\end{equation}
\end{lemma}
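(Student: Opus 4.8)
The plan is to prove the two implications separately, transporting the manifold structure along the corestriction of $f$ onto its image and reducing the general‑manifold condition \eqref{iff} to the Cartesian‑domain condition \eqref{condition} by locality of smoothness. Throughout I would write $S := f(M)$ and let $\widetilde{f}\colon M \to S$ denote $f$ with its codomain restricted to $S$.

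For the forward implication, assume $f$ is an induction, so that $S$ is a diffeological submanifold of $N$ and $\widetilde{f}$ is a diffeomorphism. Given a manifold $U$ and a map $p\colon U \to M$: if $p$ is smooth then $f\circ p$ is smooth, since every induction is smooth. Conversely, suppose $f\circ p$ is smooth and set $q := \widetilde{f}\circ p\colon U \to S$, so that $\iota\circ q = f\circ p$. I want to conclude that $q$ is smooth and then recover $p = \widetilde{f}^{-1}\circ q$. Because \eqref{condition} is phrased only for domains that are open subsets of Cartesian spaces, I first reduce to that case: for each parametrization $\psi\colon V \to U$ (the inverse of a chart of $U$, with $V$ open in a Cartesian space), the composite $\iota\circ q\circ\psi = (f\circ p)\circ\psi$ is smooth, so \eqref{condition} applied to $q\circ\psi\colon V \to S$ shows that $q\circ\psi$ is smooth; as smoothness of a map out of a manifold is local on the source and the images of the $\psi$ cover $U$, the map $q$ is smooth. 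Then $p = \widetilde{f}^{-1}\circ q$ is smooth, which gives \eqref{iff}.

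For the converse, assume \eqref{iff}. The crux, and the step I expect to be the main obstacle, is deducing that $f$ is injective, since \eqref{iff} makes no mention of injectivity. Here I would argue by contradiction using a jump discontinuity: if $f(a)=f(b)$ with $a\neq b$, define $p\colon \R \to M$ by $p(t)=a$ for $t\le 0$ and $p(t)=b$ for $t>0$. Then $f\circ p$ is the constant map at $f(a)=f(b)$, hence smooth, whereas $p$ is not even continuous, since $M$ is Hausdorff and $a\neq b$ force the left and right limits of $p$ at $0$ to disagree; thus $p$ is not smooth, contradicting \eqref{iff}. Hence $f$ is injective and $\widetilde{f}\colon M \to S$ is a bijection.

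Finally, I would transport the manifold structure of $M$ to $S$ through the bijection $\widetilde{f}$, that is, equip $S$ with the unique manifold structure making $\widetilde{f}$ a diffeomorphism; it then remains to verify \eqref{condition}. Given $U$ open in a Cartesian space and a map $q\colon U \to S$, set $p := \widetilde{f}^{-1}\circ q\colon U \to M$, so that $q$ is smooth if and only if $p$ is smooth, and $\iota\circ q = f\circ p$. Applying \eqref{iff} to this $U$ shows that $p$ is smooth if and only if $f\circ p = \iota\circ q$ is smooth; chaining the two equivalences yields exactly \eqref{condition}. Therefore $S$ is a diffeological submanifold of $N$ and $\widetilde{f}$ is a diffeomorphism of $M$ onto it, so $f$ is an induction. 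Apart from injectivity, the only point requiring care is the locality reduction in the forward direction, which is routine.
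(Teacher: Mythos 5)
Your proposal is correct and follows essentially the same route as the paper's proof: the same jump-discontinuity path argument for injectivity (the paper isolates this as Lemma~\ref{lem:one-to-one}), the same transport of the manifold structure of $M$ to $S$ along the corestriction, and the same chaining of equivalences to pass between \eqref{iff} and \eqref{condition}. The only cosmetic difference is that you spell out the chart-by-chart reduction from arbitrary manifold domains to Cartesian domains, which the paper dispatches in one sentence by noting that smoothness is a local property.
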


We prove Lemma~\ref{lem:induction smooth} 
in Appendix~\ref{app:criterion-smooth-map}, but here is the key step:

\begin{lemma}\labell{lem:one-to-one} 
Let $f \colon M \to N$ be a map between manifolds. 
If \eqref{iff} holds, then $f$ is one-to-one.
\end{lemma}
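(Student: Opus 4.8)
The plan is to prove the contrapositive: assuming $f$ is \emph{not} one-to-one, I will exhibit a single manifold $U$ and a map $p \colon U \to M$ for which $f \circ p$ is smooth but $p$ is not, thereby violating the ``only if'' direction of Condition~\eqref{iff}. So suppose there exist distinct points $x_1 \neq x_2$ in $M$ with $f(x_1) = f(x_2)$. The whole argument will be built around exploiting this collision.

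Concretely, I would take $U = \R$ and define $p \colon \R \to M$ by setting $p(t) = x_1$ for $t \leq 0$ and $p(t) = x_2$ for $t > 0$. The point of the construction is that the composite $f \circ p \colon \R \to N$ is the \emph{constant} map with value $f(x_1) = f(x_2)$: the two pieces of $p$ are glued precisely along the fibre of $f$ where they agree after applying $f$. A constant map is smooth, so $f \circ p$ is smooth, and the hypothesis $f(x_1) = f(x_2)$ is exactly what makes this work.

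It then remains to verify that $p$ itself is not smooth, which is where I would use that $M$ is Hausdorff. Choosing disjoint open neighbourhoods of $x_1$ and $x_2$ in $M$, the $p$-preimage of the neighbourhood of $x_1$ is $(-\infty, 0]$, which is not open in $\R$; hence $p$ is discontinuous at $0$ and in particular not smooth. Since $p$ is not smooth while $f \circ p$ is, Condition~\eqref{iff} fails, giving the desired contradiction and showing $f$ is one-to-one.

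I do not expect a serious technical obstacle; the one point requiring care is the choice of a \emph{connected} domain. If one tried a disconnected $U$ --- say a two-point manifold mapping to $x_1$ and $x_2$ --- then every map out of it would automatically be smooth, and the argument would collapse. Taking $U = \R$ and locating the failure of smoothness at the single joining point $t = 0$ (detected via Hausdorffness of $M$) is therefore the essential feature that makes the construction detect non-injectivity.
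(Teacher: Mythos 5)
Your proposal is correct and is essentially the paper's own proof: the same step map $p \colon \R \to M$ glued along the fibre of $f$, with constancy of $f \circ p$ giving smoothness and Hausdorffness of $M$ detecting the discontinuity of $p$ at $t=0$. The only difference is cosmetic --- you argue by contraposition where the paper argues directly from $f(x)=f(y)$ to $x=y$.
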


\begin{proof}
Let $x,y$ be points of $M$ such that $f(x)=f(y)$. Define $p \colon \R
\to M$ by $p(t) = x$ if $t \leq 0$ and $p(t) = y$ if $t > 0$. 
Then $f \circ p$ is constant, hence smooth. By~\eqref{iff}, $p$ is smooth, 
hence continuous.  Because $M$ is Hausdorff, this implies that $x=y$.
\end{proof}

\begin{remark} \labell{rk:domain hausdorff}
In Lemmas~\ref{lem:induction smooth} and~\ref{lem:one-to-one},
we can take $M$ and $N$ to be general diffeological spaces,
as long as the D-topology of $M$ is Hausdorff.
If the domain $M$ is not Hausdorff, this does not work:
take, for example, $M := \{0,1\}$ equipped with the coarse diffeology
(all maps $U \to M$ are plots) and $N$ a singleton.
\eor
\end{remark}

A \emph{weak embedding} is a diffeomorphism with a weakly-embedded submanifold.
Weak embeddings are exactly those maps 
that are both an induction and an immersion.
For example, $t \mapsto [t,\sqrt{2}t]$ defines a weak embedding
from $\R$ to $ \R^2/\Z^2$.

\subsection*{Pseudo-immersions} \ 

Pseudo-immersions are similar to inductions, 
except that Condition \eqref{iff} is only required to hold
with test-maps $p \colon U \to M$ that are continuous:
\begin{itemize}
\item
A smooth map $f \colon M \to N$ between manifolds is a
\define{pseudo-immersion} if and only if the following holds:
\begin{equation} \labell{iff v2}
\begin{minipage}{5.5in}
\begin{center}
For any manifold $U$ and any continuous map ${p \colon U \to M}$,  \\
$p \colon U \to M$ is smooth 
if and only if $f \circ p \colon U \to N$ is smooth.
\end{center}
\end{minipage}
\end{equation}
\end{itemize}
The restriction to continuous test-maps makes the following statements true.
(The analogous statements for inductions are false.)
\begin{itemize}
\item[(i)]
Being a pseudo-immersion is a local property:
given a map $f \colon M \to N$ between manifolds, 
if each point $x \in M$ has an open neighbourhood $\calO$ in $M$
such that $f|_\calO \colon \calO \to N$ is a pseudo-immersion,
then $f$ is a pseudo-immersion. And,
\item[(ii)]
Every immersion is a pseudo-immersion.
\end{itemize}
Thus, the notion of a pseudo-immersion does not give rise
to a notion of ``submanifold'' whose structure is entirely induced
from the ambient manifold. 

Every induction is a pseudo-immersion, but not every pseudo-immersion
is an induction.  For example, take the figure eight.

Every local induction is a pseudo-immersion,
but not every pseudo-immersion is a local induction: the map 
 \begin{equation*}
   h(x,y) :=
   \begin{cases}
     (x^2, x^3-xe^{-\frac{1}{|y|}}, y) &\text{if } y \neq 0 \\
     (x^2, x^3, 0) &\text{if } y = 0 ,
   \end{cases}
 \end{equation*}
provided by Joris and Preissmann in \cite{Joris_Preissmann1990}, 
is a pseudo-immersion, but it is not injective in any neighbourhood 
of $(0,0)$, because 
$h(e^{-\frac{1}{2|t|}}, t) = ( e^{-\frac{1}{|t|}} , 0 , t) =
 h(-e^{-\frac{1}{2|t|}},t)$.
This raises the question:
\begin{equation}
\labell{q}
\text{If a pseudo-immersion is locally injective, is it a local induction?}
\end{equation}

We answer this question in the affirmative. 
Recall that a map from a topological space $A$ to a topological space $B$ 
is a \define{topological embedding} if it is a homeomorphism of $A$
with a subset of $B$, taken with its subspace topology.

\begin{exercise} \labell{local homeo}
If a map $f \colon A \to B$ from a locally compact space $A$ to a
Hausdorff space $B$ is continuous and locally injective, 
then each point of $A$ has a neighbourhood $\calO$ 
such that $f|_{\calO}$ is a topological embedding.
\end{exercise}

\begin{remark} \labell{rk:homeo induction}
If a smooth map $f \colon M \to N$ is a pseudo-immersion and a topological embedding, then $f$ is an induction. Indeed, let $p \colon U \to M$ be a map such that $f \circ p \colon U \to N$ is smooth. Because $f$ is a topological embedding, $p$ is continuous. Because $f$ is a pseudo-immersion, $p$ is smooth.
\eor
\end{remark}

Combining Exercise~\ref{local homeo} and Remark~\ref{rk:homeo induction}, 
we obtain the answer to the Question~\eqref{q}:

\begin{corollary}\labell{cor:1}
A smooth map between manifolds is a local induction
if and only if it is a locally injective pseudo-immersion.
\end{corollary}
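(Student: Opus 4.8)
The plan is to prove the two implications separately, relying on three facts already established: every induction is a pseudo-immersion, being a pseudo-immersion is a local property (item~(i) above), and the two ingredients just recorded, namely Exercise~\ref{local homeo} and Remark~\ref{rk:homeo induction}. The whole corollary is really a bookkeeping exercise that glues these together, so I would keep the argument short.

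For the forward implication, suppose $f \colon M \to N$ is a local induction. First I would observe that $f$ is locally injective: by definition each point of $M$ has an open neighbourhood $\calO$ on which $f|_\calO$ is an induction, hence a diffeomorphism onto a diffeological submanifold of $N$, and in particular injective on $\calO$. Next I would show $f$ is a pseudo-immersion: each such $f|_\calO$ is an induction and therefore a pseudo-immersion, so every point of $M$ has a neighbourhood on which $f$ restricts to a pseudo-immersion; since being a pseudo-immersion is a local property, $f$ itself is a pseudo-immersion. Together these say that $f$ is a locally injective pseudo-immersion.

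For the reverse implication, suppose $f \colon M \to N$ is a locally injective pseudo-immersion. Being a pseudo-immersion, $f$ is smooth, hence continuous; moreover $M$ is locally compact and $N$ is Hausdorff, as both are manifolds. Thus $f$ is a continuous, locally injective map from a locally compact space to a Hausdorff space, and Exercise~\ref{local homeo} provides about each point of $M$ an open neighbourhood $\calO$ with $f|_\calO \colon \calO \to N$ a topological embedding. I would then note that the restriction of a pseudo-immersion to an open subset is again a pseudo-immersion (for continuous $p \colon U \to \calO$, smoothness of $p$ into the open submanifold $\calO$ is equivalent to smoothness of the composite into $M$, and likewise after composing with $f$), so $f|_\calO$ is a pseudo-immersion that is also a topological embedding. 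By Remark~\ref{rk:homeo induction}, $f|_\calO$ is an induction. As this holds near every point, $f$ is a local induction.

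The conceptual content has already been isolated in Exercise~\ref{local homeo} and Remark~\ref{rk:homeo induction}, so I do not expect a genuine obstacle here; the only points that need care are the two locality observations, namely that ``induction $\Rightarrow$ pseudo-immersion'' combined with locality upgrades a local induction to a global pseudo-immersion, and conversely that restricting a pseudo-immersion to an open subset preserves the property. The real substance of the answer to Question~\eqref{q} lives one level down, in the passage from local injectivity to a local topological embedding supplied by Exercise~\ref{local homeo}, which is exactly where local compactness of $M$ and the Hausdorff property of $N$ are consumed.
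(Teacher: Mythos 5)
Your proposal is correct and follows essentially the same route as the paper, which proves the corollary precisely by combining Exercise~\ref{local homeo} and Remark~\ref{rk:homeo induction}; you have merely made explicit the routine gluing steps (induction $\Rightarrow$ pseudo-immersion plus locality for the forward direction, and restriction of a pseudo-immersion to an open subset for the reverse) that the paper leaves implicit. No gaps.
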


We summarize the various inclusion relations in Figure~\ref{fig:1}.

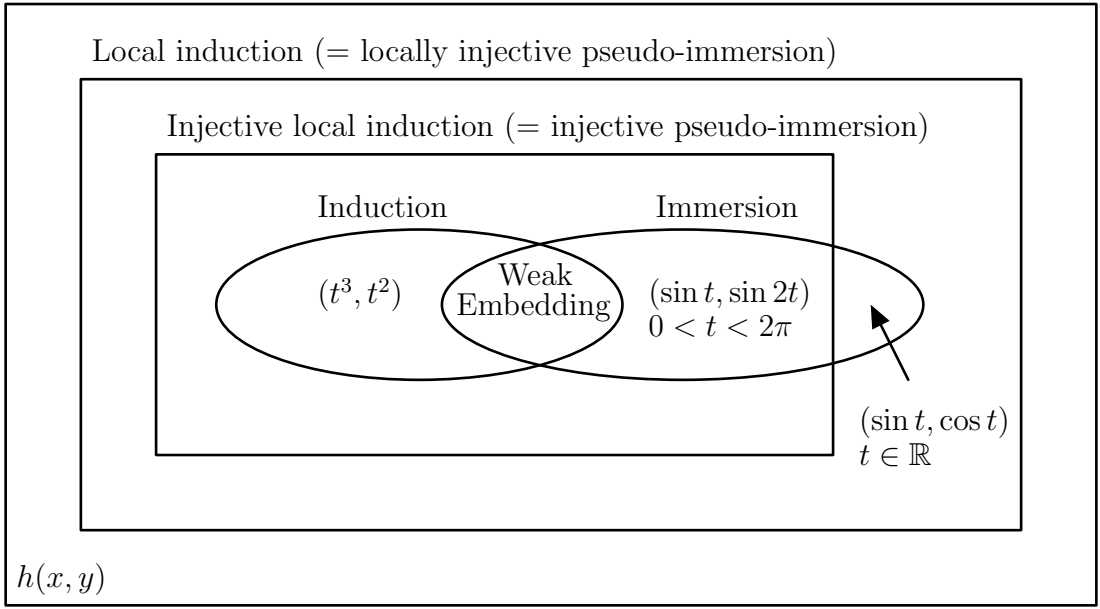
\begin{figure}[h]
  \centering
\begin{tikzpicture}[line cap=round,line join=round,>=triangle 45,x=1cm,y=1cm]
\draw [rotate around={0:(-1.5,0)},line width=1pt] (-1.5,0) ellipse (2.7cm and 1cm);
\draw [rotate around={0:(2,0)},line width=1pt] (2,0) ellipse (3.2cm and 1cm);
\draw (-7,4.7) node[anchor=north west] {Pseudo-immersion};
\draw (-6,3.7) node[anchor=north west] {Local induction (= locally injective pseudo-immersion)};
\draw (-5,2.7) node[anchor=north west] {Injective local induction (= injective pseudo-immersion)};
\draw (-3,1.6) node[anchor=north west] {Induction};
\draw (-0.6, 0.7) node[anchor=north west] {Weak};
\draw (-1.15, 0.3) node[anchor=north west] {Embedding};
\draw (1.5,1.6) node[anchor=north west] {Immersion};
\draw (-3,0.5) node[anchor=north west] {$(t^3, t^2)$};
\draw (1.4,0.5) node[anchor=north west] {$(\sin t, \sin 2t)$};
\draw (1.4,0) node[anchor=north west] {$0<t<2\pi$};
\draw (4.2,-1.2) node[anchor=north west] {$(\sin t, \cos t)$};
\draw (4.2,-1.7) node[anchor=north west] {$t \in \RR$};
\draw (-7,-3.3) node[anchor=north west] {$h(x,y)$};
\draw [line width=1pt] (-7,4) rectangle (7.5,-4);
\draw [line width=1pt] (-6,3) rectangle (6.5,-3);
\draw [line width=1pt] (-5,2) rectangle (4,-2);
\draw [->,line width=1pt] (5,-1) -- (4.5,0);
\end{tikzpicture}
\caption{The inclusion relations for pseudo-immersions, inductions, local inductions, immersions, and their injective and locally injective incarnations. The equalities follow from Corollary \ref{cor:1}.}
\label{fig:1}
\end{figure}

\subsection*{A categorical perspective} 
\begin{remark} \labell{rk:categorical}
Taking a categorical approach to smooth manifolds, 
the natural notion of ``submanifold'' is diffeological, not weakly-embedded.
We now elaborate, following Definition 8.6 of ``The Joy of Cats'' 
\cite{Adamek_Herrlich_Strecker2004}.

Let $\bfA$ be a concrete category over a category $\bfX$;
this means that the category $\bfA$ is equipped 
with a faithful functor $\bfA \to \bfX$.
We write this functor as $A \mapsto |A|$ on objects,
and we consider the set of $\bfA$-morphisms $A \to B$
as a subset of the set of $\bfX$-morphisms $|A| \to |B|$.
An $\bfA$-morphism $f \colon A \to B$ is \textbf{initial}
if the following holds:
\begin{quotation}
\begin{minipage}{5.5in}
\begin{center}
For any $\bfA$-object $C$ and $\bfX$-morphism $g \colon |C| \to |A|$,  \\
$g$ is an $\bfA$-morphism 
  if and only if $f \circ g \colon |C| \to |B|$ is an $\bfA$-morphism.
\end{center}
\end{minipage}
\end{quotation}

Viewing the category of manifolds and smooth maps
as a concrete category over the category of sets and set-maps,
a smooth map $f \colon M \to N$ is an initial morphism
exactly if \eqref{iff} holds.
By Lemma~\ref{lem:induction smooth}, 
the initial morphisms are exactly the inductions.

If, instead, we view the category of manifolds and smooth maps
as a concrete category over the category of topological spaces
and continuous maps, 
a smooth map $f \colon M \to N$ is an initial morphism
if and only if \eqref{iff v2} holds.
Thus, in this context, the initial morphisms 
are exactly the pseudo-immersions.
\eor
\end{remark}

\appendix

\section{Proofs of Joris' Theorem}
\labelsec{sec:joris}

In this appendix we present a proof of Joris' theorem
that is based on that of Amemiya and Masuda~\cite{Amemiya_Masuda1989}.
Here is the strategy:

\begin{enumeratea}
\item Using Boman's theorem\footnote{
A real valued function $g \colon U \to \RR$ on an open subset $U$ 
of a Cartesian space is smooth if the composition $g \circ \gamma$ 
is smooth for any smooth curve $\gamma \colon \RR \to U$.
} 
\cite{Boman1967}, reduce Joris' theorem to a statement about functions 
of a single variable: 
\begin{equation} \labell{hypothesis}
\begin{minipage}{4.5in}
Let $m$ and $n$ be relatively prime positive integers,
and let $g$ be a real-valued function of a single variable
such that $g^m$ and $g^n$ are smooth.
Then $g$ is smooth.
\end{minipage}
\end{equation}

\item Use the continuity of the $m$th or $n$th root to establish that $g$
is continuous, and smooth outside its set of zeros.

\item Prove that $g$ is smooth outside the set $P$ 
of flat zeros of $g^m$.
(The \emph{flat zeros} are the points where all the derivatives vanish.)

\item Prove that $g$ is smooth everywhere.

\end{enumeratea}

We leave (a) and (b) to the reader as an exercise.
Here is a proof of (c):

\begin{proof} [Proof of (c)]
Let $t$ be a non-flat zero of $g^m$. Then $t$ is also a non-flat zero of $g^n$. (Indeed, at each point, either all or none of $g^m$, $g^{mn}$, and $g^n$ are flat.)
By Hadamard's lemma, we can write
\begin{equation*}
g^m(\tau) \, = \, (\tau-t)^M h_1(\tau) \quad \text{and} \quad 
g^n(\tau) \, = \, (\tau-t)^N h_2(\tau)
\end{equation*}
where $M$ and $N$ are positive integers
and where $h_1$ and $h_2$ are smooth functions that are non-vanishing at $t$.
Since $(g^m)^n = (g^n)^m$,
$$ (\tau-t)^{Mn} \, h_1^n(\tau) \, = \, (\tau-t)^{Nm} \, h_2^m(\tau);$$
since $h_1$ and $h_2$ are non-zero at $t$, this implies that $Mn = Nm$.
So $m$ divides $Mn$.  But $\gcd(m,n)=1$, so $m$ must also divide $M$.
Writing $g(\tau) = (g^m(\tau))^{1/m} = (\tau-t)^{M/m} h_1^{1/m}(\tau)$, 
we see that $g$ is smooth near $t$.
\end{proof}

The proof of (d) is harder.
Joris \cite{Joris1982} proved by induction that,
for each positive integer $\sigma$,
\begin{equation*}
  g^{(\sigma)} \text{ exists everywhere,} \quad g^{(\sigma)} \text{ vanishes on }P, \quad \text{and } g^{(\sigma)} \text{ is flat on } P.
\end{equation*}
Joris proceeded by contradiction, and invoked a combinatorial lemma that expressed $(g^m)^{(\sigma-1)}$ in terms of $(g^a)^{\sigma-1}(g^b)^\sigma w_{a,b}$, where $a+b \leq m$ and $w_{a,b}$ is continuous and flat on $P$. This argument was quite involved.
Seven years later, Amemiya and Masuda \cite{Amemiya_Masuda1989} presented a simpler ring-theoretic proof. 
Here is their key lemma, and the proof for (d).

\begin{lemma}[\cite{Amemiya_Masuda1989}] 
\labell{lem:4}
Let $S$ be a subring of a ring $R$, such that if $a \in R$ and $a^r \in S$
for every sufficiently large $r$, then $a \in S$. Then the ring of power
series $S[[x]]$ in $R[[x]]$ has the same property.
\end{lemma}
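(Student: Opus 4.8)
The plan is to show that every coefficient of a power series $A=\sum_{k\ge 0}a_k x^k\in R[[x]]$ with $A^r\in S[[x]]$ for all sufficiently large $r$ already lies in $S$, arguing by induction on the coefficient index $k$. Throughout I write $(A^r)_j$ for the coefficient of $x^j$ in $A^r$; the hypothesis says that $(A^r)_j\in S$ for every $j$ once $r$ is large. The base case is immediate: the constant term of $A^r$ is $a_0^r$, so $a_0^r\in S$ for all large $r$, and the assumed property of $S\subseteq R$, applied to the element $a_0\in R$, gives $a_0\in S$.

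For the inductive step I would assume $a_0,\dots,a_{k-1}\in S$ and read off the coefficient of $x^k$ from the identity $A^{r+1}=A\cdot A^r$, namely $(A^{r+1})_k=\sum_{i=0}^{k}a_i\,(A^r)_{k-i}$. Since every $(A^r)_j\in S$ and every $a_i$ with $i<k$ lies in $S$ by the inductive hypothesis, all summands are in $S$ except possibly the $i=k$ term $a_k\,(A^r)_0=a_0^r a_k$, so $a_0^r a_k\in S$ for all large $r$. The difficulty, and the step I expect to be the real obstacle, is that this is \emph{not} enough to conclude $a_k\in S$: one cannot simply cancel the factor $a_0^r$. For instance, with $S=\ZZ\subseteq\QQ=R$, which does satisfy the required property, the choice $a_0=2$ and $a_k=\tfrac12$ gives $a_0^r a_k=2^{r-1}\in\ZZ$ for all $r\ge 1$, even though $a_k\notin\ZZ$. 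Thus the low-degree coefficients of the powers $A^r$ carry too little information, and the argument must bring in the higher-degree coefficients.

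The way I would try to overcome this is to produce an honest power $a_k^{\,r}\in S$ and then apply the property of $S\subseteq R$ directly to $a_k$. The guiding principle is that the failure of $A$ to lie in $S[[x]]$ should be amplified $r$-fold upon passing to $A^r$: if one measures non-membership in $S[[x]]$ by a content- or valuation-type quantity that is multiplicative, then $A^r$ would leave $S[[x]]$ worse and worse as $r\to\infty$, contradicting $A^r\in S[[x]]$ unless $A$ already lay in $S[[x]]$. In the model case $\ZZ\subseteq\QQ$ this is exactly the multiplicativity of the Gauss valuation, $v_p(A^r)=r\,v_p(A)$.

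The main obstacle is that for an arbitrary subring $S\subseteq R$ there is no valuation available to make this ``defect multiplied by $r$'' heuristic rigorous, and the naive attempt to read off $a_k^{\,r}$ from a high-degree coefficient of $A^r$ is blocked: at the natural degree $kr$ the pure monomial $a_k^{\,r}$ is contaminated by monomials involving the still-uncontrolled higher coefficients $a_{k+1},a_{k+2},\dots$ (any composition of $kr$ into $r$ parts other than all $k$'s has a part exceeding $k$), and this contamination cannot be removed by truncating at $x^{k+1}$, which would destroy the very degree-$kr$ coefficient one needs. Turning the multiplicativity heuristic into a purely ring-theoretic argument that sidesteps both difficulties is the crux, and is precisely what Amemiya and Masuda's lemma is meant to supply. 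A reformulation I would keep in mind is that $A$ is integral over $S[[x]]$, being a root of the monic polynomial $Y^{r_0}-A^{r_0}$ with $A^{r_0}\in S[[x]]$; so the statement asserts that this restricted form of integral dependence descends from the ambient ring to the coefficients. I would then try to exploit the coprime consecutive exponents $r_0,r_0+1$, for which $A^{r_0},A^{r_0+1}\in S[[x]]$ and $A\cdot A^{r_0}=A^{r_0+1}$, to compare ``contents'' multiplicatively and force $A\in S[[x]]$.
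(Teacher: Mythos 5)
Your proposal does not prove the lemma: after the (correct) base case $a_0 \in S$, everything that follows is a diagnosis of obstacles and a list of heuristics, and the inductive step --- actually deducing $a_k \in S$ --- is never carried out. You say as much yourself when you write that the crux ``is precisely what Amemiya and Masuda's lemma is meant to supply''; but the crux \emph{is} the lemma, so what you have is an honest account of why the naive arguments fail, not a proof. (For calibration: the paper itself also gives no proof of this lemma --- it quotes it from Amemiya--Masuda and only proves step~(d) of Joris' theorem assuming it --- so the burden your write-up defers is exactly the burden the citation is carrying.)

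Two concrete points about your proposed repairs. First, the integrality reformulation ($A$ is a root of $Y^{r_0} - A^{r_0}$ with $A^{r_0} \in S[[x]]$) provably cannot work, because it retains only a \emph{single} exponent, and the conclusion fails under a single-power hypothesis: with $S = \ZZ \subseteq R = \QQ$ (which satisfies the hypothesis of the lemma, $\ZZ$ being integrally closed in $\QQ$), the series $A = (1+x)^{1/2} = 1 + \tfrac{1}{2}x - \tfrac{1}{8}x^2 + \cdots \in \QQ[[x]]$ has $A^2 = 1+x \in \ZZ[[x]]$ but $A \notin \ZZ[[x]]$; so any correct argument must genuinely use all large exponents (equivalently two coprime ones, since the numerical semigroup they generate contains all sufficiently large integers --- your last idea at least aims at the right hypothesis). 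Second, your ``$r$-fold amplification'' heuristic can be partially implemented without any valuation: setting $P := \sum_{i<k} a_i x^i \in S[x]$ and $C := A - P$, the identity $C A^r = A^{r+1} - P A^r$ gives $C A^r \in S[[x]]$ for $r \ge r_0$, and iterating ($C^{j+1}A^r = C^j A^{r+1} - P\,C^j A^r$) gives $C^j A^r \in S[[x]]$ for all $j \ge 0$; since $C$ has order $\ge k$, the coefficient of $x^{jk}$ in $C^j A^{r_0}$ is exactly $a_k^{\,j} a_0^{r_0}$, with no contamination from $a_{k+1}, a_{k+2}, \dots$ (those enter only above degree $jk$), so $a_k^{\,j} a_0^{r_0} \in S$ for all $j$ --- a $j$-fold amplified defect against a \emph{fixed} compensation, which kills your $\ZZ \subseteq \QQ$ obstruction ($2^{r_0-j} \notin \ZZ$ for $j > r_0$). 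But even this does not close the induction over a general ring: the relations $a_k^{\,j} a_0^{r} \in S$ for all $j \ge 0$, $r \ge r_0$ are strictly weaker than $a_k \in S$ (in $S = \ZZ \times \QQ \subseteq R = \QQ \times \QQ$, which has the required property, $a_0 = (0,2)$ and $a_k = (\tfrac{1}{2},0)$ satisfy them all with $a_k \notin S$), so the hypothesis must be exploited beyond bottom coefficients. That remaining step is the actual content of Amemiya--Masuda's argument, and it is missing from your proposal.
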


\begin{proof}[Proof of (d), assuming Lemma~\ref{lem:4}] \ \\
Let $g \colon \R \to \R$ be a function satisfying
the hypothesis of~\eqref{hypothesis}. 
By (b) and (c), $g$ is continuous and is smooth outside the set $P$.
Set
\begin{align*}
 R := \{h \colon \RR \smallsetminus P \to \RR \mid \ & h \text{ is smooth } \}, 
    \\
 S := \{h \colon \RR \smallsetminus P \to \RR \mid \ & h \text{ is smooth, } \\
   & \text{ and $h$ extends to a continuous map $\RR \to \RR$ 
that vanishes on $P$ } \}
\end{align*}
These satisfy the conditions of Lemma \ref{lem:4}. 
Define a homomorphism $J \colon R \to R[[x]]$ by 
$$ J(h) := \sum_{i=0}^\infty x^i \frac{h^{(i)}}{i!}.$$ 

Note that $h:= g|_{\R \smallsetminus P}$ is in $R$.

Because $(m,n)$ is relatively prime, 
every sufficiently large integer $r$
can be written as non-negative integer combination of $m$ and $n$.
Indeed, after possibly switching $m$ and $n$,
there exist integers $a < 0 < b$ such that ${am+bn=1}$.
For every integer $r \geq (-a)mn$,
write $r = (-a)mn + An + j$ with $A \geq 0$ and $0 \leq j < n$;
then $r = (-a)(n-j)m +(A+bj)n$.

For all such $r$, since $g^m$ and $g^n$ are smooth and are flat at $P$, 
so is $g^r$.
So $(h^r)^{(i)} = (g^r)^{(i)}|_{\R \smallsetminus P}$ is in $S$ for all~$i$.
Therefore, $J(h^r)$ is in $S[[x]]$.
Since $J$ is a homomorphism, $J(h)^r = J(h^r)$, so $J(h)^r \in S[[x]]$.  
By Lemma \ref{lem:4}, $J(h) \in S[[x]]$. 
This means that each $h^{(i)}$ is in $S$, 
or in other words that each derivative $g^{(i)}:\R \smallsetminus P \to \R$ 
extends to a continuous map $\R \to \R$ that vanishes on $P$. 
Combined with the fact $g$ is continuous, we may conclude that $g$ is smooth 
(Lemma 2 in \cite{Amemiya_Masuda1989}, 
a consequence of the Mean Value Theorem).
\end{proof}

Another six years later, Myers \cite{Myers2005} proposed a short,
elementary proof of Joris' theorem, using only Rolle's theorem. 
Myres laid out the general strategy of the proof very clearly, but
unfortunately, his proof that $g$ is smooth at points in $P$ lacks
detail. 
In his induction step, Myers shows 
that if $g$ is of type $C^{k-1}$
and its first $k-1$ derivatives vanish at the points of $P$
then its $k$th derivative exists and vanishes at the points of $P$.
But he does not show that this $k$th derivative is continuous 
at the points of $P$.
In fact, his argument seems to apply to any function whose square alone 
is smooth and flat on $P$; as shown by Glaeser \cite{Glaeser1963}, 
such a function need not be smooth.

\section{Diffeology}
\labelsec{app:diffeology}

A \define{diffeological space} is a set $X$ equipped with a collection
of maps $p \colon U \to X$, called plots, from open subsets of Cartesian
spaces to $X$, that satisfies the following three axioms:
\begin{itemize}
\item Constant maps are plots.
\item The precomposition of a plot with a $C^\infty$ map between open subsets of Cartesian spaces is a plot.
\item If the restrictions of $p \colon U \to X$ to elements of an open cover of $U$ are plots, then $p$ is a plot.
\end{itemize}
The \define{subset diffeology} of a subset $A$ of a diffeological space
$X$ consists of those maps $p \colon U \to A$ whose composition with
the inclusion map are plots of $X$.

A map between diffeological space is \define{smooth} if its precomposition
with each plot is a plot. The map is a \define{diffeomorphism} if it is
a bijection and it and its inverse are smooth.

On a diffeological space $X$, the \define{D-topology} is the final
topology that is induced by the set of plots:
a subset of $X$ is open iff its preimage 
under each plot is open in the domain of the plot.

Equipping each manifold $M$ with the set of $C^\infty$ maps 
from open subsets of Cartesian spaces to $M$,
we obtain a fully faithful functor from the category of manifolds
to the category of diffeological spaces. This identifies manifolds with those diffeological spaces
that are locally diffeomorphic to Cartesian spaces
and whose D-topology is Hausdorff and paracompact.
%

\section{Criterion for diffeological induction}
\labelsec{app:criterion-smooth-map}

In this appendix we prove Lemma~\ref{lem:induction smooth},
which gives the necessary and sufficient criterion \eqref{iff}
for a smooth map
$f \colon M \to N$ between manifolds to be a diffeological induction. 

In fact, 
the ``only if'' direction of \eqref{iff} is automatically true
for all smooth $f$ (and is equivalent to the smoothness of $f$),
and the criterion~\eqref{iff} 
is necessary and sufficient for $f \colon M \to N$ being an induction
even if $f$ is just a set-map.

\begin{proof}[Proof of Lemma~\ref{lem:induction smooth}]
Let $f \colon M \to N$ be a map between manifolds,
let $S := \image f$ be its image,
let $\iota \colon S \to N$ be the inclusion map, 
and let $\hat{f} \colon M \to S$ denote the map $f$ viewed as a map to $S$.

First, suppose that~\eqref{iff} holds.
By Lemma~\ref{lem:one-to-one}, $f$ is one-to-one. 
Equip $S$ with the manifold structure with which 
the bijection $\hat{f} \colon M \to S$ is a diffeomorphism. 
Then for any open subset $U$ of any Cartesian space
and any map $q \colon U \to S$,
\begin{equation*}
q \colon U \to S \text{ is smooth \ if and only if \ } p := \hat{f}^{-1} \circ q \colon U \to M \text{ is smooth. }   
\end{equation*}
By~\eqref{iff},
\begin{equation*}
  p \colon U \to M \text{ is smooth \ if and only if \ } f \circ p \colon U \to N \text{ is smooth. } 
\end{equation*}
Because $f \circ p = \iota \circ q$, this implies 
the condition~\eqref{condition} in the definition of diffeological
submanifold.  So $S$ is a diffeological submanifold of $N$. 
Because $\hat{f} \colon M \to S$ is a diffeomorphism, $f$ is an induction.

Next, suppose that $f$ is an induction. 
Then $S$ is a diffeological submanifold of $N$, 
and --- with this manifold structure on $S$ --- 
the map $\hat{f} \colon M \to S$ is a diffeomorphism. 
Because smoothness is a local property,
the property \eqref{condition} of $S$ also holds with $U$ an arbitrary manifold
(and not only an open subset of a Cartesian space).  
Fix a manifold $U$ and a map $p \colon U \to M$.
Because $\hat{f} \colon M \to S$ is a diffeomorphism,
\begin{equation*}
p \colon U \to M \text{ is smooth \ if and only if \ } \hat{f} \circ p \colon U \to S \text{ is smooth. }  
\end{equation*}
By the version of~\eqref{condition} for manifolds $U$,
\begin{equation*}
\hat{f} \circ p \colon U \to S \text{ is smooth \ if and only if \ } \iota \circ (\hat{f} \circ p) \colon U \to N \text{ is smooth. }  
\end{equation*}
Because $\iota \circ \hat{f} \circ p = f \circ p$, we obtain~\eqref{iff}.
\end{proof}

\section{Paracompactness and second countability}
\labelsec{app:paracompactness}

We require manifolds to be Hausdorff and paracompact. Some authors impose
the stronger requirement that manifolds be Hausdorff and second countable.
With our definition, every connected component of a manifold is second
countable, but there can be uncountably many connected components.
See, e.g., \cite[p.~30, Problem 1-5]{JohnLee}.

\begin{remark}[Topological assumptions are superfluous]
\labell{superfluous}
In the definitions of an ``embedded'', ``weakly-embedded'',
or ``diffeological'' submanifold $S$, we require \emph{a priori} 
that its manifold topology be Hausdorff and paracompact.
But since the ambient manifold $N$ is Hausdorff and paracompact
and the inclusion map is continuous, these assumptions are superfluous;
see Theorem~\ref{thm:3}.
Similarly, an embedded submanifold of a second countable manifold
is automatically second countable.
In contrast, a weakly-embedded submanifold of a second countable manifold
need not be second countable.
For example, the set of irrational numbers in $\R$, 
with its discrete zero-dimensional manifold structure,
satisfies~\eqref{condition}, so it is a weakly-embedded submanifold.
\eor
\end{remark}

\begin{theorem} \labell{thm:3} \ 
Any locally Cartesian topological space $S$ 
that admits an injective continuous map 
to a (Hausdorff and paracompact) manifold $M$
is Hausdoff and paracompact.
\end{theorem}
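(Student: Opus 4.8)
The plan is to separate the two conclusions. \emph{Hausdorffness} is immediate and does not use the locally Cartesian hypothesis: if $\phi \colon S \to M$ denotes the given injective continuous map and $x \neq y$ in $S$, then $\phi(x) \neq \phi(y)$, so disjoint open neighbourhoods of $\phi(x)$ and $\phi(y)$ in the Hausdorff space $M$ pull back to disjoint open neighbourhoods of $x$ and $y$. For \emph{paracompactness} I would first reduce to a countability statement about connected components. Since $S$ is locally Cartesian it is locally compact and locally connected, so its connected components are open; by the standard structure theorem for locally compact Hausdorff spaces, such an $S$ is paracompact as soon as it is a disjoint union of $\sigma$-compact open subspaces, and for this it is enough that each connected component be $\sigma$-compact (equivalently, being connected and locally Euclidean, second countable). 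Thus it suffices to fix one connected component $C$ of $S$ and prove that $C$ is second countable.

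Next I would localize the target. As $C$ is connected and $\phi$ is continuous, $\phi(C)$ lies in a single connected component $M_0$ of $M$; being a connected component of a (Hausdorff and paracompact) manifold, $M_0$ is second countable, hence metrizable. Fix a metric $d$ on $M_0$ and pull it back along the injection $\phi|_C$ to a metric $\rho$ on $C$; the identity map from $C$ with its given topology $\tau_C$ to $C$ with the metric topology $\tau_\rho$ is then continuous, so $C$ is \emph{submetrizable} and $\tau_\rho$ is a coarser, separable topology. Independently, Exercise~\ref{local homeo}, applied with $A = S$ (locally compact), $B = M$ (Hausdorff), and the locally injective map $\phi$, shows that every point of $C$ has a neighbourhood on which $\phi$ restricts to a topological embedding; composing with a chart of $M_0$ exhibits each such neighbourhood as second countable, so $C$ is \emph{locally} second countable and the topologies $\tau_C$ and $\tau_\rho$ agree on these neighbourhoods.

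The remaining step---that a connected, locally compact, locally second countable, submetrizable Hausdorff space $C$ is itself second countable---is where I expect the real difficulty to lie. The naive approach of exhausting $C$ by $\rho$-balls fails: the irrational line in the torus shows that $\tau_\rho$ can be strictly coarser than $\tau_C$ and geometrically unrelated to it, so $\rho$-balls need not be $\tau_C$-precompact. What must be ruled out is ``long line'' behaviour, and it is precisely \emph{global} submetrizability---not merely local metrizability, which the long line also enjoys---that excludes it. I would attempt this by building an increasing exhaustion of $C$ by connected, open, $\sigma$-compact sets, which is possible because $C$ is connected, locally compact, and locally connected, and then using the coarser second countable topology $\tau_\rho$ to force the exhaustion to close up after countably many steps, after which connectedness makes it exhaust all of $C$. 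An alternative is to assemble a $\sigma$-locally finite basis for $\tau_C$ from a countable basis of $M_0$ together with the local embedding charts and invoke the Nagata--Smirnov metrization theorem. Either way, the essential content is that a global continuous injection into a second countable space cannot coexist with a non-second-countable connected manifold topology.

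Once $C$ is known to be second countable it is a connected, locally Euclidean, regular Hausdorff space with a countable basis, hence metrizable by Urysohn's theorem and in particular paracompact; feeding this back through the disjoint-union criterion of the first paragraph shows that $S$ is paracompact, completing the proof. The one step I do not expect to make routine is the second countability of the connected component $C$; everything else is bookkeeping resting on Exercise~\ref{local homeo} and standard facts about locally compact Hausdorff spaces.
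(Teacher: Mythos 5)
Your Hausdorffness argument and your reduction of paracompactness to second countability of a single connected component $C$ coincide with the paper's opening moves, and your use of Exercise~\ref{local homeo} is sound (though redundant: local second countability is already immediate from $S$ being locally Cartesian). The genuine gap is that your proposal stops exactly at the theorem's actual content: you leave unproven the claim that $C$ is second countable, and you say so yourself. Neither of your two sketches closes it as stated. The exhaustion strategy needs a mechanism guaranteeing that each stage of the exhaustion adds only countably many new pieces and that the process terminates countably, and it is not explained how the coarser separable topology $\tau_\rho$ supplies this --- as you yourself observe, $\rho$-balls need not be $\tau_C$-precompact and images of $\tau_C$-open sets under $\phi$ need not be open, so the two topologies interact only through continuity of the identity, which by itself yields nothing countable at the $\tau_C$ level. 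The Nagata--Smirnov route is worse off: local finiteness is a $\tau_C$-notion, and assembling a $\sigma$-locally finite $\tau_C$-basis from the countable basis of $M_0$ requires controlling how many connected components $\phi^{-1}(U)$ has for $U$ basic in $M_0$ --- which is precisely the hard point, not an available ingredient, so this route is circular.

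For comparison, here is how the paper's proof supplies the missing countability control. Fix a countable basis $\fk{B}_M$ of $M$, and call a pair $(W,U)$ \emph{distinguished} if $U \in \fk{B}_M$ and $W$ is a component of $i^{-1}(U)$ contained in a subset of $S$ homeomorphic to a closed ball. Two facts do all the work. (a) Every point $x$ lies in some distinguished $W$: take a closed ball $V$ whose interior contains $x$, let $F$ be its frontier; then $i(F)$ is compact, hence closed in $M$, and misses $i(x)$ \emph{by injectivity of $i$}, so some $U \in \fk{B}_M$ contains $i(x)$ and avoids $i(F)$, and the component of $i^{-1}(U)$ through $x$ cannot cross $F$, hence is trapped in $V$. (b) Only countably many distinguished $W'$ meet a given distinguished $W$: for fixed $U'$, the components of $i^{-1}(U')$ are pairwise disjoint open sets, and $W$ is separable (it sits inside a ball), so at most countably many of them can meet $W$. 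A chain argument through the connectedness of $S$ then yields a countable cover of $S$ by the second countable sets $W$. Note that injectivity is used exactly once, in (a), to separate $i(x)$ from $i(F)$; this is where your intuition that ``a global continuous injection into a second countable space cannot coexist with a non-second-countable connected manifold topology'' gets cashed out. Your first strategy is close in spirit to the paper's chain argument, but without the distinguished-pair trapping --- which converts the countable basis of $M$ into a $\tau_C$-open cover with countable overlap control --- it cannot get off the ground.
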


This result is adapted from Chapter 1, Section 11, Part 7, Theorem 1 of
Bourbaki's \emph{General Topology}, \cite{Bourbaki1966}. 
Bourbaki's result is stated with more general assumptions, 
and its proof is longer. 

\begin{proof}[Proof of Theorem \ref{thm:3}]
Let $S$ be a locally Cartesian topological space, and let  
$$ i \colon S \to M $$
be an injective continuous map.

Let $x,y$ be distinct points of $S$. 
Because $i$ is injective, $i(x) \neq i(y)$.
Because $M$ is Hausdorff, $i(x)$ and $i(y)$ have disjoint
neighbourhoods $U$, $V$ in $M$.
Because $i$ is continuous, the preimages in $S$ of $U$ and $V$
are disjoint neighbourhoods of $x$ and $y$ in $S$.
This shows that $S$ is Hausdorff.

Recall that a locally Cartesian Hausdorff topological space
is paracompact if and only if each of its connected components
is second countable.  
(See, e.g., \cite[page~459]{spivak}.)
Without loss of generality, we will now assume that $S$ and $M$ 
are connected, and we will show that $S$ is second countable.

{
Let $\fk{B}$ denote the collection of those subsets of $S$ 
that are homeomorphic to closed balls in Cartesian spaces. 
Each element of $\fk{B}$ is second countable.
We will describe a countable open cover of $S$ 
that refines $\fk{B}$.
Each element of this cover is an open subset of a second countable space,
hence is second countable.
Since the cover is countable, we conclude $S$ is second countable.

Fix a countable basis $\fk{B}_M$ of the topology of $M$.

We call a pair $(W,U)$ \define{distinguished} if 
$U$ is an element of $\fk{B}_M$
and $W$ is a component of $i^{-1}(U)$
that is contained in some element of $\fk{B}$. 
We need two facts about such pairs:

\begin{enumerate}[label = (\alph*)]
\item \emph{For any point $x \in S$, 
there is a distinguished pair $(W, U)$ with $x \in W$}:

Indeed, fix a point $x \in S$, take $V \in \fk{B}$ 
whose interior contains $x$,
and let $F := V \cap \overline{(S \smallsetminus V)}$ 
be the frontier of~$V$.
Then $F$ is compact and it does not contain~$x$.
Because $i$ is one-to-one, $i(F)$ does not contain $i(x)$;
because $i$ is continuous, $i(F)$ is compact;
because $M$ is Hausdorff, $i(F)$ is closed.
So $M \smallsetminus i(F)$ is an open neighbourhood of $i(x)$ in $M$.
Let $U$ be an element of $\fk{B}_M$ that contains $i(x)$ 
and is contained in $M \smallsetminus i(F)$. 
Set $W$ to be the component of $i^{-1}(U)$ that contains $x$. 
Then $W$ is contained in (the interior of) $V$,
and the pair $(W, U)$ is distinguished.

\item \emph{For any distinguished pair $(W, U)$,
there are at most countably many distinguished pairs $(W', U')$ 
in which $W'$ intersects $W$}:

Because $\fk{B}_M$ is countable,
it is enough to show that, for each $U' \in \fk{B}_M$,
the set of components $W'$ of $i^{-1}(U')$ that intersect $W$ is countable.
Because $S$ is locally connected, each such $W'$ is open in $S$.
Because (having fixed $U'$) the sets $W'$ are disjoint,
and because they are open subsets of $W$,
which is separable (it is contained in an element of $\fk{B}$, 
which is homeomorphic to a closed ball), 
there can be at most countably many such sets $W'$.
\end{enumerate}

Now, define $x \sim x'$ if there are distinguished pairs 
$(W_1, U_1), \ldots, (W_n, U_n)$ with $x \in W_1$, $x' \in W_n$, 
and consecutive $W_i$ intersect. 
This is an equivalence relation; (a) gives reflexivity. 
Its equivalence classes are open; because $S$ is connected, 
there is only one equivalence class.

Fix $x$.  By (a) there exists a distinguished pair $(W_1, U_1)$ 
with $x\in W_1$; set $C_1 := W_1$. 
Define $C_n$ recursively to be the union of the $W$ 
from distinguished pairs $(W, U)$ such that $W$ intersects $C_{n-1}$. 
Each $C_n$ is a countable union by (b). 
Furthermore, the collection of $C_n$ cover $S$: a chain $(W_i', U_i')$ 
from $x$ to $x'$ witnesses $W_i' \subseteq C_{i+1}$ for each $i$. 
We take our open cover to be given by those $W$ that occur in this 
construction.
}
\end{proof}

\begin{remark}
In Theorem~\ref{thm:3},
if $S$ is a not-necessarily-paracompact manifold and $i$ is an immersion,
an alternative argument is to equip $M$ with a Riemannian metric, 
pull it back to a Riemannian metric on $S$,
and note that connected Riemannian manifolds are metrizable 
and connected metrizable manifolds are second countable
(see \cite[Theorem 7, page 315]{spivak} and \cite[page~459]{spivak}).
The advantage of Theorem \ref{thm:3} is that it is
purely topological, and it applies 
to diffeological submanifolds that are not necessarily weakly-embedded.
\eor
\end{remark}

\section{Remarks on the literature}
\labelsec{app:literature}

\subsection*{Submanifolds in some textbooks} \ 

Our notions of ``(embedded) submanifold'' and ``weakly-embedded submanifold'' 
are equivalent to those in standard textbooks 
such as John Lee's \cite[Chapter~5]{JohnLee},
except that some authors (including John Lee)
require their manifolds and submanifolds to be second countable 
and not only paracompact; see Appendix~\ref{app:paracompactness}.

\begin{remark} \labell{embedded via slice}
Our definition in \S\ref{sec:submanifolds} for what it means
for a subset $S$ of a manifold $N$ to be an ``(embedded) submanifold''
is essentially an unravelling of the definition of ``submanifold'' 
in Guillemin and Pollack~\cite{guillemin-pollack}.
A different, but equivalent, definition in the literature
is that about each point of $S$ there is a chart
$\varphi \colon \calO \to \Omega \subset \R^n$ of $N$
that takes $S \cap \calO$ onto the intersection of $\Omega$
with a linear subspace of $\R^n$.
(Cf.~Remark~\ref{rk:criterion}.)
\end{remark} 

\begin{remark} \labell{immersed}
An alternative approach is through the notion of an 
\emph{immersed submanifold}, defined as a subset $S$, 
equipped with a manifold structure,
such that the inclusion map is an immersion. 
This notion is not an intrinsic property of the subset $S$:
it requires the choice of a manifold structure on $S$.

An (embedded) submanifold can be defined as an immersed submanifold $S$
whose manifold topology agrees with its subset topology,
and a weakly embedded submanifold can be defined 
as an immersed submanifold $S$ such that Condition~\eqref{condition} holds;
see e.g.\ \cite{JohnLee}.
In either of these cases, the manifold structure on $S$ is unique.
Our definitions of submanifold and weakly-embedded submanifold
are intrinsic; they avoid an {\it a priori} choice of a manifold structure 
on $S$.

Not every immersed submanifold is weakly-embedded. 
For example, the parametrizations of the figure eight in $\R^2$ 
as $(\sin t , \sin (2t))$ for $0 < t  < 2\pi$ 
and for $-\pi < t < \pi$
exhibit it as two distinct immersed submanifolds.
\eor
\end{remark}

\begin{remark} \labell{rk:def weakly embedded}
According to Kolar-Michor-Slovak \cite[Def.~2.14]{KMS1993},
an \emph{initial submanifold} of a manifold $N$
is a subset $S$ of $N$ that satisfies the criterion \eqref{criterion}.
Lemma~2.15 of their book \cite{KMS1993} can be rephrased 
as saying that every weakly-embedded submanifold is an initial submanifold.
Lemmas~2.16 and~2.17 of their book~\cite{KMS1993} can be rephrased 
as saying that every initial submanifold is a weakly-embedded submanifold.
\eor
\end{remark}

\subsection*{Initial maps in some textbooks} \ 

Kolar-Michor-Slovak \cite[2.10]{KMS1993}
refer to smooth maps that satisfy the property \eqref{iff} 
as \emph{initial}.
This term is consistent with \cite{Adamek_Herrlich_Strecker2004};
see Remark~\ref{rk:categorical}.
Kolar-Michor-Slovak do not note that initial maps are automatically injective,
which is the content of our Lemma~\ref{lem:one-to-one}.
By our Lemma~\ref{lem:induction smooth}, 
initial maps coincide with (diffeological) inductions.

While Kolar-Michor-Slovak do not work with diffeology, 
they note \cite[Remark~2.13]{KMS1993} 
that, by Joris' theorem (Theorem~\ref{thm:1}), 
the cusp $t \mapsto (t^m, t^n)$ is initial, but is not an immersion. 
This coincides with our observation in Example \ref{ex:1} that the cusp 
is an induction that is not an immersion.
Kolar-Michor-Slovak use the cusp as evidence that ``to look for all smooth
[initial mappings] is too difficult'' \cite[Remark~2.10]{KMS1993}.
They then focus on initial injective immersions;
these coincide with weak embeddings.
They do not introduce a term for weak embeddings,
but their notion of an ``initial submanifold'',
which they define using charts, is equivalent to the notion 
of a weakly-embedded submanifold; see Remark \ref{rk:def weakly embedded}.

Maps that satisfy the property \eqref{iff} also appear 
in Jeffrey Lee's book \cite[Def.~3.10]{Lee2009};
he calls them \emph{smoothly universal}.
He too does not note that such maps are necessarily injective.
By our Lemma~\ref{lem:induction smooth}, these maps are exactly the inductions.
Jeffrey Lee defines \emph{weak embeddings} to be 
smoothly universal injective immersions \cite[Def.~3.11 on p.~129]{Lee2009};
this agrees with our usage of this term.

Henri Joris and Emmanuel Preissmann introduced the notion of
a pseudo-immersion in their 1987 paper \cite{Joris_Preissmann1987}. 
Henri Joris had studied such maps already in 1982 \cite{Joris1982}, 
at the suggestion of Alfred Fr\"{o}licher;
he called their defining property \eqref{iff v2} a ``universal property''.
According to Joris,
it was Fr\"{o}licher who suggested the example $t \mapsto (t^2,t^3)$
as a candidate for a pseudo-immersion that is not an immersion.



\end{document}